\providecommand{\U}[1]{\protect\rule{.1in}{.1in}}
\theoremstyle{plain}
\newtheorem{corollary}{Corollary}
\newtheorem{definition}{Definition}
\newtheorem{remark}{Remark}
\newtheorem{theorem}{Theorem}
\numberwithin{equation}{section}
\begin{document}
\title[Similarity Degree]{The Similarity Degree of Some C*-algebras}
\author{Don Hadwin}
\address{University of Mew Hampshire}
\email{don@unh.edu}
\author{Weihua Li}
\curraddr{Columbia College Chicago}
\email{wli@colum.edu}
\subjclass[2000]{Primary 46L05; Secondary 46A22, 46H25, 46M10, 47A20.}
\keywords{Kadison Similarity Problem, similarity degree, tracially nuclear, C*-algebra}

\begin{abstract}
We define the class of weakly approximately divisible unital C*-algebras and
show that this class is closed under direct sums, direct limits, any tensor
product with any C*-algebra, and quotients. A nuclear C*-algebra is weakly
approximately divisible if and only if it has no finite-dimensional
representations. We also show that Pisier's similarity degree of a weakly
approximately divisible C*-algebra is at most 5.

\end{abstract}
\maketitle

\section{Introduction\bigskip}

One of the most famous and oldest open problems in the theory of C*-algebras
is Kadison's Similarity problem \cite{Kadison1}, which asks whether every
bounded unital homomorphism $\rho$ from a C*-algebra $\mathcal{A}$ into the
algebra $B\left(  H\right)  $ of operators on a Hilbert space $H$ must be
similar to a $\ast$-homomorphism, i.e., does there exist an invertible $S\in
B\left(  H\right)  $ such that $\pi\left(  A\right)  =S\rho\left(  A\right)
S^{-1}$ defines a $\ast$-homomorphism? One measure of the quality of a good
problem is the number of interesting equivalent formulations. In this regard
Kadison's problem gets high marks:

\begin{enumerate}
\item[I.] Inner Derivation Problem: (\cite{Kir}, \cite{Christensen1}) If
$\mathcal{M}\subseteq B\left(  H\right)  $ is a von Neumann algebra and
$\delta:\mathcal{M}\rightarrow B\left(  H\right)  $ is a derivation, does
there exist a $T\in B\left(  H\right)  $ such that, for every $A\in
\mathcal{M}$,%
\[
\delta\left(  A\right)  =AT-TA?
\]

\item[II.] Hyperreflexivity Problem: (\cite{Kir}, \cite{Christensen1}) If
$\mathcal{M}\subseteq B\left(  H\right)  $ is a von Neumann algebra, does
there exist a $K$, $1\leq K<\infty$ such that, for every $T\in B\left(
H\right)  ,$%
\[
dist\left(  T,\mathcal{M}\right)  \leq K\sup\left\{  \left\Vert
PT-TP\right\Vert :P\in\mathcal{M}^{\prime}\text{, }P=P^{\ast}=P^{2}\right\}
.
\]

\item[III.] Dixmier's Invariant Operator Range Problem \cite{Dix} (Foia\c{s}
\cite{F}, Pisier \cite[Theorem 10.5]{Pisier 5}, see also \cite{HP}): If
$\mathcal{M}\subseteq B\left(  H\right)  $ is a von Neumann algebra, $A\in
B\left(  H\right)  $ and $T\left(  A\left(  H\right)  \right)  \subseteq
A\left(  H\right)  $ for every $T\in\mathcal{M}$, then does there exist
$D\in\mathcal{M}^{\prime}$ such that $A\left(  H\right)  =D\left(  H\right)
$? Vern Paulsen \cite{P2} proved that an affirmative answer is equivalent to
the assertion that the range of $A\oplus A\oplus\cdots$ is invariant for
$\mathcal{M}\otimes\mathcal{K}\left(  \ell^{2}\right)  $.
\end{enumerate}

In \cite{Haagerup 1} U. Haagerup proved that Kadison's question has an
affirmative answer whenever the representation $\rho$ has a cyclic vector, a
result that is independent of the structure of the algebra $\mathcal{A}$.
Haagerup \cite{Haagerup 1} also showed that a homomorphism $\rho$ is similar
to a $\ast$-homomorphism if and only if $\rho$ is completely bounded (see also
\cite{Ch2}; see the union of \cite{H} and \cite{W} for another proof; see
\cite{Paulsen} and \cite{P2} for a lovely exposition of these ideas). In
\cite{Pisier 1} G. Pisier proved that, for a fixed C*-algebra $\mathcal{A}$,
every bounded homomorphism of $\mathcal{A}$ is similar to a $\ast
$-homomorphism if and only if $\mathcal{A}$ satisfies a certain factorization
property. It was shown in \cite{HP} that Kadison's similarity property is
universally true if and only if there is a Pisier-like factorization in terms
of scalar matrices and noncommutative polynomials that is independent of the
C*-algebra. It was also shown in \cite{HP} that if $H=\ell^{2}\oplus\ell
^{2}\oplus\cdots$ and $D=1\oplus\frac{1}{2}\oplus\frac{1}{2^{2}}\oplus\cdots$
and $\mathcal{S}$ is the unital algebra of all operators $T\in B\left(
H\right)  $ with an operator matrix $T=\left(  A_{ij}\right)  $ such that
$\rho\left(  T\right)  =D^{-1}TD=\left(  2^{j-i}A_{ij}\right)  $ is bounded,
then Kadison's similarity problem has an affirmative answer if and only if,
for every unital C*-subalgebra $\mathcal{A}$ of $\mathcal{S}$, the
homomorphism $\rho|_{\mathcal{A}}$ is similar to a $\ast$-homomorphism.

Our main focus in this paper is another amazing result of G. Pisier
\cite{Pisier 1} where he shows that, for a unital C*-algebra $\mathcal{A}$,
Kadison's similarity property holds for $\mathcal{A}$ if and only if there is
a positive number $d$ for which there is a positive number $K$ such that%
\[
\left\Vert \rho\right\Vert _{cb}\leq K\left\Vert \rho\right\Vert ^{d}%
\]
for every bounded unital homomorphism $\rho$ on $\mathcal{A}$. Pisier proved
that the smallest such $d$ is an integer which he calls the \emph{similarity
degree} $d\left(  \mathcal{A}\right)  $ of $\mathcal{A}$. Here are a few
results on the similarity degree:

\begin{enumerate}
\item $\mathcal{A}$ is nuclear if and only if $d({\mathcal{A}})=2$
(\cite{Bunce1}, \cite{Christensen1}, \cite{Pisier2});

\item if ${\mathcal{A}}={\mathcal{B}(\mathcal{H})}$, then $d({\mathcal{A}})=3$
(\cite{Pisier3});

\item $d({\mathcal{A}}\otimes{\mathcal{K}(\mathcal{H})})\leq3$ for any
C$^{\ast}$-algebra $\mathcal{A}$ (\cite{Haagerup 1}, \cite{Pisier4});

\item if $\mathcal{M}$ is a factor of type $II_{1}$ with property $\Gamma$,
then $d({\mathcal{M}})=3$ (\cite{Christensen2});

\item if $\mathcal{A}$ is an approximately divisible C*-algebra, then
$d\left(  \mathcal{A}\right)  \leq5$ \cite{li}.

\item if $\mathcal{A}$ is nuclear and contains unital matrix algebras of any
order, then $d(\mathcal{A}\otimes\mathcal{B})\leq5$ for any unital C*-algebra
$\mathcal{B}$ (\cite{F. Pop});

\item if $\mathcal{A}$ is nuclear and contains finite-dimensional
C*-subalgebras of arbitrarily large subrank (see the definition below), then
$d(\mathcal{A}\otimes\mathcal{B})\leq5$ for any unital C*-algebra
$\mathcal{B}$ \cite{li};

\item if $\mathcal{A}$ is nuclear and contains homomorphic images of certain
dimension-drop C*-algebras $\mathcal{Z}_{p,q}$ for all relatively prime
integers $p,q$ (e.g., $\mathcal{A}$ contains a copy of the JiangSu algebra),
then $d(\mathcal{A}\otimes\mathcal{B})\leq5$ for any unital C*-algebra
$\mathcal{B}$ \cite{JW}.
\end{enumerate}

\bigskip

In this paper we define the class of weakly approximately divisible
C*-algebras and show that this class is closed under unital $\ast
$-homomorphisms, arbitrary tensor products and direct limits. We also define
the class of tracially nuclear C*-algebras that properly contains the class of
nuclear C*-algebras, and we show that a tracially nuclear C*-algebra is weakly
approximately divisible if and only if it has no finite-dimensional
representations. We prove that if $\mathcal{A}$ is weakly approximately
divisible, then $d\left(  \mathcal{A}\right)  \leq5$. We extend the results
$6$-$8$ above to the case when $\mathcal{A}$ is tracially nuclear and has no
finite-dimensional representations, and the tensor product is with respect to
any C*-crossnorm.

\bigskip

\section{Weakly approximately divisible algebras}

\bigskip

\bigskip

If $\tau$ is a tracial state on $\mathcal{M}$, we let $\left\Vert
\cdot\right\Vert _{\tau}$ denote the seminorm on $\mathcal{M}$ defined in the
GNS construction by%
\[
\left\Vert a\right\Vert _{\tau}^{2}=\tau\left(  a^{\ast}a\right)  .
\]

Suppose $\mathcal{B}$ is a finite-dimensional unital C*-subalgebra of a unital
C*-algebra $\mathcal{A}$. First we know that $\mathcal{B}$ is $\ast
$-isomorphic to $\mathcal{M}_{k_{1}}\left(  \mathbb{C}\right)  \oplus
\cdots\oplus\mathcal{M}_{k_{m}}\left(  \mathbb{C}\right)  $ and the
\emph{subrank}$\left(  \mathcal{B}\right)  $ is defined to be $\min\left(
k_{1},\ldots,k_{m}\right)  .$ Note that if $\pi:\mathcal{B}\rightarrow
\mathcal{D}$ is a unital $\ast$-homomorphism, then
\[
\emph{subrank}\left(  \mathcal{B}\right)  \leq\emph{subrank}\left(  \pi\left(
\mathcal{B}\right)  \right)  .
\]
If $P_{1}=1\oplus0\oplus\cdots\oplus0,$ $P_{2}=0\oplus1\oplus0\oplus
\cdots\oplus0,\ldots,P_{m}=0\oplus\cdots\oplus1$ are the minimal central
projections of $\mathcal{B}$, then, for $1\leq s\leq m$, we have
$P_{s}\mathcal{A}P_{s}$ is isomorphic to $\mathcal{M}_{k_{s}}\left(
\mathbb{C}\right)  \otimes\mathcal{A}_{s}=\mathcal{M}_{k_{s}}\left(
\mathcal{A}_{s}\right)  $ for some algebra $\mathcal{A}_{s}$. The relative
commutant of $\mathcal{M}_{k_{s}}\left(  \mathbb{C}\right)  $ in
$\mathcal{M}_{k_{s}}\left(  \mathcal{A}_{s}\right)  $ is
\[
\mathcal{D}_{s}=\left\{  \left(
\begin{array}
[c]{cccc}%
A &  &  & \\
& A &  & \\
&  & \ddots & \\
&  &  & A
\end{array}
\right)  :A\in\mathcal{A}_{s}\right\}  ,
\]
and the relative commutant of $\mathcal{B}$ in $\mathcal{A}$ is $\mathcal{D}%
_{1}\oplus\cdots\oplus\mathcal{D}_{m}$. Suppose $T\in\mathcal{A}$, and
$P_{s}TP_{s}=\left(  a_{ijs}\right)  _{1\leq i,j\leq k_{s}}$. Let
$D_{s}=diag\left(  c,\ldots,c\right)  $ where $c=\frac{1}{k_{s}}\left(
a_{11s}+\cdots+a_{k_{s}k_{s}s}\right)  $. The map $E_{\mathcal{B}}%
:\mathcal{A}\rightarrow\mathcal{B}^{\prime}\cap\mathcal{A}$ sending $T$ to
$D_{1}\oplus\cdots\oplus D_{m}$ is called the conditional expectation from
$\mathcal{A}$ to $\mathcal{B}^{\prime}\cap\mathcal{A}$ and is a completely
positive unital idempotent. For $1\leq s\leq m$, let $\mathcal{G}_{s}$ be the
group of all matrices in $\mathcal{M}_{k_{s}}\left(  \mathbb{C}\right)  $ suct
that the only one nonzero entry in each row and each column is $1$ or $-1$,
and let $\mathcal{G}=\mathcal{G}_{1}\oplus\cdots\oplus\mathcal{G}_{m}%
\subseteq\mathcal{B}$. Then we have%
\begin{equation}
E_{\mathcal{B}}\left(  T\right)  =\frac{1}{Card\mathcal{G}}%
{\displaystyle\sum_{U\in\mathcal{G}}}
UTU^{\ast}. \tag{$\left(     \ast\right)     $}%
\end{equation}
Moreover, if $S\in\mathcal{B}^{\prime}\cap\mathcal{A}$ and $T\in\mathcal{A}$,
then%
\[
E_{\mathcal{B}}\left(  ST\right)  =SE_{\mathcal{B}}\left(  T\right)  \text{
and }E_{\mathcal{B}}\left(  TS\right)  =E_{\mathcal{B}}\left(  T\right)
S\text{.}%
\]
Furthermore, if $\tau$ is a tracial state on $\mathcal{A}$, then, for every
$A\in\mathcal{A}$,
\[
\left\Vert E_{\mathcal{B}}\left(  A\right)  \right\Vert _{\tau}\leq\left\Vert
A\right\Vert _{\tau}.
\]

Suppose $\mathcal{M}$ is a von Neumann algebra and $\left\{  v_{i}:i\in
I\right\}  \subseteq\mathcal{M}$ is a family satisfying $\sum_{i\in I}%
v_{i}^{\ast}v_{i}=1$ (convergence is in the weak*-topology). Then
$\varphi\left(  T\right)  =\sum_{i\in I}v_{i}^{\ast}Tv_{i}$ defines a unital
completely positive map from $\mathcal{M}$ to $\mathcal{M}$. Let us call such
a map \emph{internally spatial}, and call a unital completely positive map
\emph{internal} if it is a convex combination of internally spatial maps
on\emph{ }$\mathcal{M}$.

\begin{remark}
\label{int}There are two key properties of internal maps:

\begin{enumerate}
\item They can be pushed forward through normal unital $\ast$-homomorphisms
between von Neumann algebras. Suppose $\mathcal{M}$ and $\mathcal{N}$ are von
Neumann algebras and $\rho:\mathcal{M}\rightarrow\mathcal{N}$ is a unital
weak*-weak*-continuous unital $\ast$-homomorphism, and suppose $\left\{
v_{i}:i\in I\right\}  \subseteq\mathcal{M}$ with $\sum_{i\in I}v_{i}^{\ast
}v_{i}=1$ and $\varphi\left(  T\right)  =\sum_{i\in I}v_{i}^{\ast}Tv_{i}$.
Then $\left\{  \pi\left(  v_{i}\right)  :i\in I\right\}  \subseteq\mathcal{N}$
and
\[
1=\pi\left(  1\right)  =\pi\left(  \sum_{i\in I}v_{i}^{\ast}v_{i}\right)
=\sum_{i\in I}\pi\left(  v_{i}\right)  ^{\ast}\pi\left(  v_{i}\right)  .
\]
We define $\varphi^{\pi}\left(  S\right)  =\sum_{i\in I}\pi\left(
v_{i}\right)  ^{\ast}S\pi\left(  v_{i}\right)  $, and we have, for every
$a\in\mathcal{M}$%
\[
\varphi^{\pi}\left(  \pi\left(  a\right)  \right)  =\pi\left(  \varphi\left(
a\right)  \right)  .
\]
So if $b\in\pi\left(  \mathcal{A}\right)  $ and $b=\pi\left(  a\right)  ,$
then $\varphi^{\pi}\left(  b\right)  =\pi\left(  \varphi\left(  a\right)
\right)  ,$ which independent of $a$. For a general $\varphi$ this only makes
sense when $\varphi\left(  \ker\pi\right)  \subseteq\ker\pi$. It follows that
$\varphi^{\pi}$ makes sense when $\varphi$ is an internal map, and in this
case, $\varphi^{\pi}$ is an internal map on $\mathcal{N}$.

\item If $\varphi\left(  T\right)  =\sum_{i\in I}v_{i}^{\ast}Tv_{i}$ and $T$
commutes with each $v_{i}$, then, for every $S,$ we have%
\[
\varphi\left(  ST\right)  =\varphi\left(  S\right)  T.
\]
Hence if $\psi$ is a convex combination of spatially internal maps defined in
terms of elements commuting with an operator $T,$ we have $\psi\left(
ST\right)  =\psi\left(  S\right)  T$.
\end{enumerate}
\end{remark}

\begin{definition}
\label{WAD}We say that a unital C*-algebra $\mathcal{A}$ is \emph{weakly
approximately divisible} if and only if, for every finite subset $\mathcal{F}$
of $\mathcal{A}$ there is a net $\left\{  \left(  \mathcal{B}_{\lambda
},\varphi_{\lambda}\right)  \right\}  _{\lambda\in\Lambda}$ where each
$\mathcal{B}_{\lambda}$ is a finite-dimensional unital C*-subalgebra of
$\mathcal{A}^{\#\#}$ and $\varphi_{\lambda}$ is an internal completely
positive map such that

\begin{enumerate}
\item $\lim_{\lambda}$\textrm{subrank}$\left(  \mathcal{B}_{\lambda}\right)
=\infty,$

\item $\varphi_{\lambda}:\mathcal{A}\rightarrow\mathcal{B}_{\lambda}^{\prime
}\cap\mathcal{A}^{\#\#},$

\item For every $a\in\mathcal{F}$, $\varphi_{\lambda}\left(  a\right)
\rightarrow a$ in the weak*-topology on $\mathcal{A}^{\#\#}$.
\end{enumerate}
\end{definition}

\begin{remark}
Suppose $n$ is a positive integer and let $\mathcal{V}_{n}$ be the set of
$n$-tuples $\left(  a_{1},\ldots,a_{n}\right)  $ of elements in $\mathcal{A}$
such that the conditions in Definition \ref{WAD} hold when $\mathcal{F}%
\mathbf{=}\left\{  a_{1},\ldots,a_{n}\right\}  $. Suppose $U_{k}$ is a
weak*-neighborhood of $a_{k}$ in $\mathcal{A}^{\#\#}$ for $1\leq k\leq n.$
Since addition on $\mathcal{A}^{\#\#}$ is weak*-continuous, there is a
weak*-neighborhood $V_{k}$ of $a_{k}$ and a weak*-neighborhood $E$ of $0$ such
that%
\[
V_{k}+E\subseteq U_{k}%
\]
for $1\leq k\leq n$. Suppose $\left(  b_{1},\ldots,b_{n}\right)  $ is in the
norm closure of $\mathcal{V}_{n}$ and suppose $U_{k}$ is a weak*-neighborhood
of $b_{k}$ in $\mathcal{A}^{\#\#}$ for $1\leq k\leq n.$ Since addition on
$\mathcal{A}^{\#\#}$ is weak*-continuous, there is a weak*-neighborhood
$V_{k}$ of $b_{k}$ and a weak*-neighborhood $E$ of $0$ such that%
\[
V_{k}+E\subseteq U_{k}%
\]
for $1\leq k\leq n$. Since $0\in E$ and $E$ is weak*-open, there is an
$\varepsilon>0$ such that $\left\{  x\in\mathcal{A}^{\#\#}:\left\Vert
x\right\Vert <\varepsilon\right\}  \subseteq E.$ Now choose $\left(
a_{1},\ldots,a_{n}\right)  \in\mathcal{V}_{n}$ so that $a_{k}\in V_{k}$ and
$\left\Vert a_{k}-b_{k}\right\Vert <\varepsilon$ for $1\leq k\leq n$. Next
suppose $m$ is a positive integer. It follows from the definition of
$\mathcal{V}_{n}$ that there is a finite-dimensional C*-subalgebra
$\mathcal{B}$ of $\mathcal{A}^{\#\#}$ and a completely positive unital map
$\varphi:\mathcal{A\rightarrow B}^{\prime}\cap\mathcal{A}^{\#\#}$ such that
\textrm{subrank}$\left(  \mathcal{B}\right)  \geq m$ and such that
$\varphi\left(  a_{k}\right)  \in V_{k}$ for $1\leq k\leq n$. It follows that
$\varphi\left(  b_{k}\right)  -\varphi\left(  a_{k}\right)  =\varphi\left(
b_{k}-a_{k}\right)  \in E$ for $1\leq k\leq n$, so
\[
\varphi\left(  b_{k}\right)  \in V_{k}+E\subseteq U_{k}%
\]
for $1\leq k\leq n$. Hence $\left(  b_{1},\ldots,b_{n}\right)  \in
\mathcal{V}_{n}$. Thus $\mathcal{V}_{n}$ is norm closed. It is also clear that
$\mathcal{V}_{n}$ is a linear space. Hence, to verify that $\mathcal{A}$ is
weakly approximately divisible, it is sufficient to show that the conditions
of Definition \ref{WAD} holds for all finite subsets $\mathcal{F}$ of a set
$W$ whose norm closed linear span $\overline{sp}\left(  W\right)  $ is
$\mathcal{A}$.
\end{remark}

Recall \cite{Tak 2} that a C*-algebra $\mathcal{A}$ is \emph{nuclear} if, for
every Hilbert space $H$ and every unital $\ast$-homomorphism $\pi
:\mathcal{A}\rightarrow B\left(  H\right)  $ we have $\pi\left(
\mathcal{A}\right)  ^{\prime\prime}$ is a hyperfinite von Neumann algebra. We
say that $\mathcal{A}$ is \emph{tracially nuclear} if, for every tracial state
$\tau$ on $\mathcal{A}$ with GNS representation $\pi_{\tau}$ we have
$\pi_{\tau}\left(  \mathcal{A}\right)  ^{\prime\prime}$ is a hyperfinite von
Neumann algebra. As a flip side of the notion of RFD C*-algebras, we say that
a unital C*-algebra $\mathcal{A}$ is \emph{NFD} if $\mathcal{A}$ has no unital
finite-dimensional representations.

\begin{theorem}
Suppose $\mathcal{A}$ and $\mathcal{D}$ are unital C*-algebras. Then

\begin{enumerate}
\item If $\mathcal{A}$ is approximately divisible, then $\mathcal{A}$ is
weakly approximately divisible.

\item If $\mathcal{A}$ is weakly approximately divisible and $\pi
:\mathcal{A}\rightarrow\mathcal{D}$ is a surjective unital $\ast
$-homomorphism, then $\mathcal{D}$ is weakly approximately divisible.

\item If $\mathcal{A}$ is weakly approximately divisible, then $\mathcal{A}$
has no finite-dimensional representations.

\item If $\mathcal{A}$ is weakly approximately divisible, then $\mathcal{A}%
\otimes_{\mathrm{\max}}\mathcal{D}$ is weakly approximately divisible.

\item A finite direct sum $\sum_{1\leq k\leq n}^{\oplus}\mathcal{A}_{k}$ of
unital C*-algebras is weakly approximately divisible if and only if each
summand $\mathcal{A}_{k}$ is weakly approximately divisible.

\item If $n$ is a positive integer, then $\mathcal{A}\otimes\mathcal{M}%
_{n}\left(  \mathbb{C}\right)  $ is weakly approximately divisible if and only
if $\mathcal{A}$ is.

\item A direct limit of weakly approximately divisible C*-algebras is weakly
approximately divisible.

\item If $\mathcal{A}$ is an NFD C*-algebra and $\mathcal{M}$ is the type
$II_{1}$ direct summand of $\mathcal{A}^{\#\#}$ and $\gamma:\mathcal{A}%
\rightarrow\mathcal{M}$ is the inclusion into $\mathcal{A}^{\#\#}$ followed by
the projection map, then $\mathcal{A}$ is weakly approximately divisible if
and only if, for every finite subset $\mathcal{F}\subseteq\mathcal{A}$ there
is a net $\left\{  \left(  \mathcal{B}_{\lambda},\varphi_{\lambda}\right)
\right\}  $ where $\mathcal{B}_{\lambda}$ is a finite-dimensional
C*-subalgebra of $\mathcal{M}$, $\varphi_{\lambda}$ is an internal map on
$\mathcal{M}$ and%
\[
\varphi_{\lambda}\left(  \pi\left(  a\right)  \right)  \rightarrow
\gamma\left(  a\right)
\]
in the weak*-topology for every $a\in\mathcal{F}$.

\item If $\mathcal{A}$ is tracially nuclear, then $\mathcal{A}$ is weakly
approximately divisible if and only if $\mathcal{A}$ is NFD.

\item If $\mathcal{A}$ is nuclear, then $\mathcal{A}$ is weakly approximately
divisible if and only if $\mathcal{A}$ is NFD.
\end{enumerate}
\end{theorem}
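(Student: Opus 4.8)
The plan is to deduce statement (10) directly from statement (9) together with the essentially formal observation that every nuclear C*-algebra is tracially nuclear. With the definition of nuclearity adopted here---that $\pi\left(  \mathcal{A}\right)  ^{\prime\prime}$ is hyperfinite for \emph{every} unital $\ast$-homomorphism $\pi:\mathcal{A}\rightarrow B\left(  H\right)  $---the representations that enter the definition of tracial nuclearity, namely the GNS representations $\pi_{\tau}$ of tracial states $\tau$ on $\mathcal{A}$, form a subclass of the representations already controlled by nuclearity. Hence $\pi_{\tau}\left(  \mathcal{A}\right)  ^{\prime\prime}$ is hyperfinite for every tracial state $\tau$, which is exactly the assertion that $\mathcal{A}$ is tracially nuclear. (Equivalently, one may argue that nuclearity of $\mathcal{A}$ makes $\mathcal{A}^{\#\#}$ injective, that each $\pi_{\tau}$ extends to a normal surjective $\ast$-homomorphism from $\mathcal{A}^{\#\#}$ onto $\pi_{\tau}\left(  \mathcal{A}\right)  ^{\prime\prime}$, whose image is therefore injective, and that $\pi_{\tau}\left(  \mathcal{A}\right)  ^{\prime\prime}$ is finite since the cyclic trace vector is separating, so Connes' theorem identifies it as hyperfinite; but given the definition above, the direct route is immediate.)

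With this inclusion of classes established, statement (9) applied to $\mathcal{A}$ yields that $\mathcal{A}$ is weakly approximately divisible if and only if $\mathcal{A}$ is NFD. Since $\mathcal{A}$ nuclear implies $\mathcal{A}$ tracially nuclear, this is precisely statement (10), so nothing further is needed.

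I do not expect any genuine obstacle in this final step: the substantive work---building, for a given finite $\mathcal{F}\subseteq\mathcal{A}$, the net of finite-dimensional subalgebras $\mathcal{B}_{\lambda}\subseteq\mathcal{M}$ with $\mathrm{subrank}\left(  \mathcal{B}_{\lambda}\right)  \rightarrow\infty$ together with internal maps $\varphi_{\lambda}$ on $\mathcal{M}$ with $\varphi_{\lambda}\left(  \pi\left(  a\right)  \right)  \rightarrow\gamma\left(  a\right)  $ weak$^{\ast}$, exploiting hyperfiniteness of the $II_{1}$ summand $\mathcal{M}$ of $\mathcal{A}^{\#\#}$ and the NFD hypothesis---is carried out in statements (8) and (9). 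The only point requiring care is to confirm that ``hyperfinite'' is used consistently throughout (approximately finite-dimensional, equivalently injective for the finite von Neumann algebras that occur, which carry faithful normal traces and so fall under Connes' theorem), so that the passage from nuclearity to tracial nuclearity, and hence the reduction to (9), is legitimate.
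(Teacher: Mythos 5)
Your argument for item (10) is correct and is exactly the paper's: the authors dispose of (10) in one line by observing that nuclearity (in the sense used here, hyperfiniteness of $\pi\left(\mathcal{A}\right)^{\prime\prime}$ for \emph{every} representation $\pi$) trivially implies tracial nuclearity, since the GNS representations $\pi_{\tau}$ of tracial states form a subclass of all representations, and then invoking item (9). Your parenthetical detour through injectivity of $\mathcal{A}^{\#\#}$ and Connes' theorem is harmless but unnecessary given the definition in force; the direct route you identify is the one the paper takes.

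The genuine gap is one of scope: the statement you were asked to prove is the full ten-part theorem, and your proposal establishes only item (10), explicitly taking items (8) and (9) --- and implicitly (1)--(7) --- as given. Those earlier items carry essentially all of the content: in particular (8) requires constructing the spatially internal maps $\psi_{N}$ on the non-$II_{1}$ summand $\mathcal{N}$ of $\mathcal{A}^{\#\#}$ from a system of matrix units built out of equivalent projections summing to $1$, and (9) requires the approximation argument inside the hyperfinite $II_{1}$ summand using conditional expectations $E_{\mathcal{B}_{\lambda}}$ and the seminorms $\left\Vert\cdot\right\Vert_{\tau}$ attached to finite sets of normal tracial states. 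None of that work appears in your proposal, so as a proof of the stated theorem it is incomplete, even though the one reduction you do carry out is sound and matches the paper.
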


\begin{proof}
$\left(  1\right)  .$ This follows immediately from the definitions.

$\left(  2\right)  .$ If $\pi:\mathcal{A}\rightarrow\mathcal{D}$ is a
surjective unital $\ast$-homomorphism, then $\pi$ extends to a
weak*-weak*-continuous surjective unital $\ast$-homomorphism $\rho
:\mathcal{A}^{\#\#}\rightarrow\mathcal{D}^{\#\#}$. Given $d_{1},\ldots
,d_{n}\in\mathcal{D}$, choose $a_{1},\ldots,a_{n}\in\mathcal{A}$ so that
$\pi\left(  a_{k}\right)  =d_{k}$ for $1\leq k\leq n.$ Choose a net $\left\{
\left(  \mathcal{B}_{\lambda},\varphi_{\lambda}\right)  \right\}  $ according
to Definition \ref{WAD} with $\mathcal{F}=\left\{  a_{1},\ldots,a_{n}\right\}
$. It follows that $\varphi_{\lambda}^{\rho}$ is an internal completely
positive map on $\mathcal{D}^{\#\#}$ and
\[
\varphi_{\lambda}^{\rho}\left(  \mathcal{D}\right)  =\varphi_{\lambda}^{\rho
}\left(  \rho\left(  \mathcal{A}\right)  \right)  =\rho\left(  \varphi
_{\lambda}\left(  \mathcal{A}\right)  \right)  \subseteq
\]%
\[
\rho\left(  \mathcal{B}_{\lambda}^{\prime}\cap\mathcal{A}^{\#\#}\right)
\subseteq\rho\left(  \mathcal{B}_{\lambda}\right)  ^{\prime}\cap
\mathcal{D}^{\#\#}.
\]
Further for each $d_{k}$ we have%
\[
\text{w*-}\lim_{\lambda}\varphi_{\lambda}^{\rho}\left(  d_{k}\right)
=\text{w*-}\lim_{\lambda}\rho\left(  \varphi_{\lambda}\left(  a_{k}\right)
\right)  =\rho\left(  a_{k}\right)  =d_{k},
\]
since $\rho$ is weak*-weak*-continuous. Since \textrm{subrank}$\left(
\mathcal{B}_{\lambda}\right)  \leq$\textrm{subrank}$\left(  \rho\left(
\mathcal{B}_{\lambda}\right)  \right)  $, we conclude that $\mathcal{D}$ is
weakly approximately divisible.

$\left(  3\right)  .$ This follows from $\left(  2\right)  $ and the obvious
fact that no finite-dimensional C*-algebra is weakly approximately divisible.

$\left(  4\right)  .$ Let $\rho:\mathcal{A}\otimes_{\mathrm{\max}}%
\mathcal{D}\rightarrow\left(  \mathcal{A}\otimes_{\mathrm{\max}}%
\mathcal{D}\right)  ^{\#\#}$ be the natural inclusion map. We can assume
$\left(  \mathcal{A}\otimes_{\mathrm{\max}}\mathcal{D}\right)  ^{\#\#}%
\subseteq B\left(  H\right)  $ for some Hilbert space $H$ so that, on bounded
subsets of $\left(  \mathcal{A}\otimes_{\mathrm{\max}}\mathcal{D}\right)
^{\#\#}$, the weak*-topology coincides with the weak-operator topology. If
$\rho:\mathcal{A\rightarrow A}\otimes1\mathcal{\subseteq}\mathcal{A}%
\otimes_{\mathrm{\max}}\mathcal{D}$ is the inclusion map, then there is a
weak*-weak*-continuous unital $\ast$-homomorphism $\sigma:\mathcal{A}%
^{\#\#}\rightarrow\left(  \mathcal{A}\otimes_{\mathrm{\max}}\mathcal{D}%
\right)  ^{\#\#}$ such that the restriction of $\sigma$ to $\mathcal{A}$ is
$\rho$. Let $W=\left\{  a\otimes b:a\in\mathcal{A},b\in\mathcal{B}\right\}  $.
Clearly, $\overline{sp}W=\mathcal{A}\otimes_{\max}\mathcal{B}$ (where the
closure is with respect to $\left\Vert {}\right\Vert _{\max}$). Suppose
$a_{1}\otimes b_{1},\ldots,a_{n}\otimes b_{n}\in W$. Since $\mathcal{A}$ is
weakly approximately divisible, we can choose a net $\left\{  \left(
\mathcal{B}_{\lambda},\varphi_{\lambda}\right)  \right\}  $ as in Definition
\ref{WAD}. We know that $\left\{  \varphi_{\lambda}^{\sigma}\right\}  $ is a
net of internal maps on $\left(  \mathcal{A}\otimes_{\mathrm{\max}}%
\mathcal{D}\right)  ^{\#\#}$ and
\[
\varphi_{\lambda}^{\sigma}\left(  a_{k}\otimes1\right)  =\varphi_{\lambda
}^{\sigma}\left(  \sigma\left(  a_{k}\right)  \right)  =\sigma\left(
\varphi_{\lambda}\left(  a_{k}\right)  \right)  \rightarrow\sigma\left(
a_{k}\right)  =a_{k}\otimes1
\]
in the weak*-topology for $1\leq k\leq n$. On the other hand each
$\varphi_{\lambda}$ is a convex combination of spatially internal maps defined
by partial isometries in $\mathcal{A}^{\#\#},$ so each $\varphi_{\lambda
}^{\sigma}$ is a convex combination of spatially internal maps defined by
partial isometries in $\sigma\left(  \mathcal{A}^{\#\#}\right)  $ which is
contained in $\left(  \mathcal{A}\otimes_{\mathrm{\max}}\mathcal{D}\right)
^{\#\#}\cap\left(  1\otimes\mathcal{D}\right)  ^{^{\prime}}$. Hence, for every
$S\in\left(  \mathcal{A}\otimes_{\mathrm{\max}}\mathcal{D}\right)  ^{\#\#}$
and every $d\in\mathcal{D}$, we have
\[
\varphi_{\lambda}^{\sigma}\left(  S\left(  1\otimes d\right)  \right)
=\varphi_{\lambda}^{\sigma}\left(  S\right)  \left(  1\otimes d\right)  .
\]
Hence, for $1\leq k\leq n$%
\[
\varphi_{\lambda}^{\sigma}\left(  a_{k}\otimes d_{k}\right)  =\varphi
_{\lambda}^{\sigma}\left(  \left(  a_{k}\otimes1\right)  \left(  1\otimes
d_{k}\right)  \right)  =\varphi_{\lambda}^{\sigma}\left(  a_{k}\otimes
1\right)  \left(  1\otimes d_{k}\right)  .
\]
But $\varphi_{\lambda}^{\sigma}\left(  a_{k}\otimes1\right)  \rightarrow
a_{k}\otimes1$ in the weak*-topology. Hence%
\[
\varphi_{\lambda}^{\sigma}\left(  a_{k}\otimes d_{k}\right)  \rightarrow
a_{k}\otimes d_{k}%
\]
in the weak*-topology on $\left(  \mathcal{A}\otimes_{\max}\mathcal{B}\right)
^{\#\#}$ for $1\leq k\leq n$. Since, for every $\lambda$,
\[
\mathrm{subrank}\left(  \mathcal{B}_{\lambda}\right)  \leq\mathrm{subrank}%
\left(  \sigma\left(  \mathcal{B}_{\lambda}\right)  \right)  ,
\]
we see that $\mathcal{A}\otimes_{\max}\mathcal{B}$ is weakly approximately divisible.

$\left(  5\right)  .$ This easily follows from the fact that $\left(
\sum_{1\leq k\leq n}^{\oplus}\mathcal{A}_{k}\right)  ^{\#\#}=\sum_{1\leq k\leq
n}^{\oplus}\mathcal{A}_{k}^{\#\#}$.

$\left(  6\right)  .$ This is clear, since $\left(  \mathcal{A}\otimes
\mathcal{M}_{n}\left(  \mathbb{C}\right)  \right)  ^{\#\#}$ is isomorphic to
$\mathcal{A}^{\#\#}\otimes\mathcal{M}_{n}\left(  \mathbb{C}\right)  $.

$\left(  7\right)  .$ Suppose $\left\{  \mathcal{A}_{i}:i\in I\right\}  $ is
an increasingly directed family of C*-subalgebras of $\mathcal{A}$ such that
$W=\cup_{i\in I}\mathcal{A}_{i}$ is dense in $\mathcal{A}$. Suppose
$\mathcal{F}\subseteq W$ is finite. Then there is an $i\in I$ such that
$\mathcal{F}\subseteq\mathcal{A}_{i}$. If $\rho:\mathcal{A}_{i}\rightarrow
\mathcal{A}$ is the inclusion map, there is a unital weak*-weak*-continuous
unital $\ast$-homomorphism $\sigma:\mathcal{A}_{i}^{\#\#}\rightarrow
\mathcal{A}^{\#\#}$ whose restriction to $\mathcal{A}_{i}$ is $\rho$. The rest
follows as in the proof of $\left(  2\right)  $.

$\left(  8\right)  .$ If $\mathcal{A}$ is weakly approximately divisible, then
for a finite subset $\mathcal{F}\subseteq\mathcal{A}$ we can find a net
$\left\{  \left(  \mathcal{B}_{\lambda},\varphi_{\lambda}\right)  \right\}  $
as in Definition \ref{WAD} that works in $\mathcal{A}^{\#\#}$, and if we
project all of this onto $\mathcal{M}$, we get the desired net. Now suppose
$\mathcal{A}$ satisfies the condition in $\left(  8\right)  $. We can write
$\mathcal{A}^{\#\#}=\mathcal{M}\oplus\mathcal{N}$, and since $\mathcal{A}$ has
no finite-dimensional representations, $\mathcal{N}$ is the direct sum of a
type $I_{\infty}$ algebra, a $II_{\infty}$ and a type $III$ algebra. In
particular this means that there is an orthogonal sequence $\left\{
P_{n}\right\}  $ of pairwise Murray-von Neumann equivalent projections whose
sum is $1$. Suppose $N$ is a positive integer, and let $Q_{k}=\sum_{j=\left(
k-1\right)  N+1}^{kN}P_{j}$. Then $\left\{  Q_{n}\right\}  $ is an orthogonal
sequence of pairwise equivalent projections whose sum is $1$. We can construct
a system of matrix units $\left\{  E_{ij}\right\}  _{1\leq i,j<\infty}$ so
that $E_{kk}=Q_{k}$ for all $k\geq1.$ Then every $T\in\mathcal{N}$ has an
infinite operator matrix $T=\left(  T_{ij}\right)  .$ The map
\[
\psi_{N}\left(  T\right)  =diag\left(  T_{11},T_{11},\ldots\right)
=\sum_{j=1}^{\infty}E_{j1}TE_{j1}^{\ast}%
\]
is spatially internal and, for every $T$%
\[
\left(  \sum_{k=1}^{N}P_{k}\right)  \psi_{N}\left(  T\right)  \left(
\sum_{k=1}^{N}P_{k}\right)  =\left(  \sum_{k=1}^{N}P_{k}\right)  T\left(
\sum_{k=1}^{N}P_{k}\right)  \rightarrow T
\]
in the weak*-topology. Hence $\psi_{N}\left(  T\right)  \rightarrow T$ in the
weak*-topology. Moreover, $\mathcal{N\cap}\psi_{N}\left(  \mathcal{N}\right)
^{\prime}$ contains full matrix algebras of all orders. Next suppose
$\mathcal{F}$ $\subseteq\mathcal{A}$ is finite. For each $A\in\mathcal{F}$ we
write $A=\gamma\left(  A\right)  \oplus T_{A}$ relative to $\mathcal{A}%
^{\#\#}=\mathcal{M}\oplus\mathcal{N}$. Given the net $\left\{  \left(
\mathcal{B}_{\lambda},\varphi_{\lambda}\right)  \right\}  $ in $\mathcal{M}$
based on our assumption on $\mathcal{A}$, we let $N_{\lambda}=$
\textrm{subrank}$\left(  \mathcal{B}_{\lambda}\right)  $ and choose a full
$N_{\lambda}\times N_{\lambda}$ matrix algebra $\mathcal{C}_{\lambda}$ in
$\mathcal{N\cap}\psi_{N}\left(  \mathcal{N}\right)  ^{\prime}$. Then
$\tau_{\lambda}\left(  S\oplus T\right)  =\varphi_{\lambda}\left(  S\right)
\oplus\psi_{N_{\lambda}}\left(  T\right)  $ is an internal map on
$\mathcal{A}^{\#\#}$ whose range is in $\left(  \mathcal{B}_{\lambda}%
\oplus\mathcal{C}_{\lambda}\right)  ^{\prime}\cap\mathcal{A}^{\#\#}$ such
that
\[
\tau_{\lambda}\left(  A\right)  \rightarrow A
\]
in the weak*-topology for every $A\in\mathcal{F}$. Hence $\mathcal{A}$ is
weakly approximately divisible.

$\left(  9\right)  .$ Let $\mathcal{M}$ and $\gamma$ be as in $\left(
8\right)  $. Let $\Lambda$ be the set of all triples $\lambda=\left(
\mathcal{F}_{\lambda},\mathcal{T}_{\lambda},k_{\lambda}\right)  $ where
$\mathcal{F}_{\lambda}\subseteq\mathcal{A}$ is finite, $\mathcal{T}_{\lambda}$
is a finite set of normal tracial states on $\mathcal{M}$, and $k_{\lambda}%
\in\mathbb{N}$. With the ordering $\left(  \subseteq,\subseteq,\leq\right)  $
we see that $\Lambda$ is a directed set. If $\tau$ is a tracial state on
$\mathcal{M}$, we let $\left\Vert \cdot\right\Vert _{\tau}$ denote the
seminorm on $\mathcal{M}$ defined by%
\[
\left\Vert A\right\Vert _{\tau}=\tau\left(  A^{\ast}A\right)  ^{1/2}.
\]

Suppose $\lambda\in\Lambda$. There is a central projection $P\in\mathcal{M}$
so that $\mathcal{M}=\mathcal{M}_{a}\oplus\mathcal{M}_{s}$ ($\mathcal{M}%
_{a}=P\mathcal{M}$) and so that $\gamma=\gamma_{a}\oplus\gamma_{s}$ and such
that $\gamma_{a}<<%
{\displaystyle\sum_{\tau\in\mathcal{T}_{\lambda}}^{\oplus}}
\pi_{\tau}$ and $\gamma_{s}$ disjoint from $%
{\displaystyle\sum_{\tau\in\mathcal{T}_{\lambda}}^{\oplus}}
\pi_{\tau}$. Since, by assumption, $\left(
{\displaystyle\sum_{\tau\in\mathcal{T}_{\lambda}}^{\oplus}}
\pi_{\tau}\right)  \left(  \mathcal{A}\right)  ^{\prime\prime}=\mathcal{M}%
_{a}$ is hyperfinite. Hence, there is a finite-dimensional unital subalgebra
$\mathcal{D}_{\lambda}$ of $\mathcal{M}_{a}$ and a contractive map
$\eta:\mathcal{F}_{\gamma}\rightarrow\mathcal{D}_{\lambda}$ such that
\[
\max_{\tau\in\mathcal{T}_{\lambda},A\in\mathcal{F}_{\lambda}}\left\Vert
P\gamma\left(  A\right)  -\eta\left(  A\right)  \right\Vert _{\tau}<\frac
{1}{k}.
\]
Note that $\left\Vert T\right\Vert _{\tau}=\left\Vert PT\right\Vert _{\tau}$
for every $T\in\mathcal{M}$ and every $\tau\in\mathcal{T}_{\lambda}$. The
relative commutant $\mathcal{D}_{\lambda}^{\prime}\cap\mathcal{M}_{a}$ is also
a $II_{1}$ von Neumann algebra, so there are $k_{\lambda}$ mutually orthogonal
unitarily equivalent projections in $\mathcal{D}_{\lambda}^{\prime}%
\cap\mathcal{M}_{a}$ whose sum is $1$. Hence $\mathcal{D}_{\lambda}^{\prime
}\cap\mathcal{M}_{a}$ contains a unital subalgebra $\mathcal{E}_{\lambda}$
that is isomorphic to $\mathcal{M}_{k}\left(  \mathbb{C}\right)  $. Similarly,
$\mathcal{M}_{s}$ (if it is not $0$) is a $II_{1}$ von Neumann algebra and
contains an isomorphic copy $\mathcal{G}_{\lambda}$ of $\mathcal{M}%
_{k_{\lambda}}\left(  \mathbb{C}\right)  $. Then $\mathcal{B}_{\lambda
}=\mathcal{E}_{\lambda}\oplus\mathcal{G}_{\lambda}$ is finite-dimensional and
$\emph{subrank}\left(  \mathcal{B}_{\lambda}\right)  =k_{\lambda}$. Define
$\varphi_{\lambda}=E_{\mathcal{B}_{\lambda}}$. For every $A\in\mathcal{F}%
_{\lambda}$ and $\tau\in\mathcal{T}_{\lambda}$, we have%
\[
\left\Vert A-\varphi_{\lambda}\left(  A\right)  \right\Vert _{\tau}=\left\Vert
PA-P\varphi_{\lambda}\left(  A\right)  \right\Vert _{\tau}\leq\left\Vert
PA-\eta\left(  A\right)  \right\Vert _{\tau}+\left\Vert \eta\left(  A\right)
-E_{\mathcal{E}_{\lambda}}\left(  PA\right)  \right\Vert _{\tau}=
\]%
\[
\left\Vert PA-\eta\left(  A\right)  \right\Vert _{\tau}+\left\Vert
E_{\mathcal{E}_{\lambda}}\left(  \eta\left(  A\right)  \right)
-E_{\mathcal{E}_{\lambda}}\left(  PA\right)  \right\Vert _{\tau}%
\leq2\left\Vert PA-\eta\left(  A\right)  \right\Vert _{\tau}\leq\frac
{2}{k_{\lambda}}.
\]
Clearly,
\[
\lim_{\lambda}\emph{subrank}\left(  \mathcal{B}_{\lambda}\right)  =\infty,
\]
and, since there are sufficiently many tracial states on $\mathcal{M}$, we
have, for every $A\in\mathcal{A}$,%
\[
\varphi_{\lambda}\left(  a\right)  \rightarrow A
\]
in the ultrastrong topology on $\mathcal{M}$. By assumption $\mathcal{A}$ has
no finite-dimensional representations, so it follows from $\left(  8\right)  $
that $\mathcal{A}$ is weakly approximately divisible.

$\left(  10\right)  .$ This follows immediately from $\left(  9\right)  $
since the nuclearity of $\mathcal{A}$ is equivalent to the hyperfiniteness of
$\pi\left(  \mathcal{A}\right)  ^{\prime\prime}$ for every representation
$\pi$ of $\mathcal{A}$.
\end{proof}

\bigskip

\section{Similarity degree\bigskip}

\begin{theorem}
If $\mathcal{A}$ is weakly approximately divisible, then the similarity degree
of $\mathcal{A}$ is at most $5$.
\end{theorem}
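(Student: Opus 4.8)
The plan is to verify Pisier's criterion: I will produce a constant $K$ (in fact $K=1$) such that $\left\Vert \rho\right\Vert _{cb}\leq K\left\Vert \rho\right\Vert ^{5}$ for every bounded unital homomorphism $\rho:\mathcal{A}\rightarrow B\left(  H\right)  $, which by Pisier's theorem forces $d\left(  \mathcal{A}\right)  \leq5$. Fix such a $\rho$, put $C=\left\Vert \rho\right\Vert $, and first extend it: since $B\left(  H\right)  $ is a dual space, $\rho$ extends uniquely to a weak*-to-$\sigma$-weakly continuous map $\widetilde{\rho}:\mathcal{A}^{\#\#}\rightarrow B\left(  H\right)  $ with $\left\Vert \widetilde{\rho}\right\Vert =C$, and separate weak*-continuity of multiplication makes $\widetilde{\rho}$ a unital homomorphism. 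Since $\left\Vert \rho\right\Vert _{cb}=\sup_{m}\left\Vert \rho_{m}\right\Vert $, I will fix a positive integer $m$ and $T\in M_{m}\left(  \mathcal{A}\right)  $ with $\left\Vert T\right\Vert \leq1$, and bound $\left\Vert \rho_{m}\left(  T\right)  \right\Vert =\left\Vert \widetilde{\rho}_{m}\left(  T\right)  \right\Vert $.

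Next I apply Definition \ref{WAD} with $\mathcal{F}=\left\{  T_{ij}\right\}  $ to obtain a net $\left\{  \left(  \mathcal{B}_{\lambda},\varphi_{\lambda}\right)  \right\}  $. Because $\varphi_{\lambda}$ is unital completely positive, hence completely contractive, the matrix $\varphi_{\lambda}^{\left(  m\right)  }\left(  T\right)  :=\left[  \varphi_{\lambda}\left(  T_{ij}\right)  \right]  $ has norm at most $1$, lies in $M_{m}\left(  \mathcal{B}_{\lambda}^{\prime}\cap\mathcal{A}^{\#\#}\right)  $, and converges weak* to $T$ in $M_{m}\left(  \mathcal{A}^{\#\#}\right)  $; applying the normal map $\widetilde{\rho}_{m}$ and using weak*-lower semicontinuity of the operator norm, $\left\Vert \rho_{m}\left(  T\right)  \right\Vert \leq\liminf_{\lambda}\left\Vert \widetilde{\rho}_{m}\left(  \varphi_{\lambda}^{\left(  m\right)  }\left(  T\right)  \right)  \right\Vert $. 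Since $\mathrm{subrank}\left(  \mathcal{B}_{\lambda}\right)  \rightarrow\infty$, it will then suffice to bound $\left\Vert \widetilde{\rho}_{m}\left(  \varphi_{\lambda}^{\left(  m\right)  }\left(  T\right)  \right)  \right\Vert $ by $C^{5}$ for all $\lambda$ with $k_{\lambda}:=\mathrm{subrank}\left(  \mathcal{B}_{\lambda}\right)  \geq m$.

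For such a $\lambda$, write $\mathcal{B}=\mathcal{B}_{\lambda}$ and $\mathcal{C}=\mathcal{B}^{\prime}\cap\mathcal{A}^{\#\#}$, and let $\mathcal{G}\subseteq\mathcal{B}$ be the finite group of signed permutation matrices of the preliminary discussion. Then $\widetilde{\rho}\left(  \mathcal{G}\right)  $ consists of invertibles of norm $\leq C$ with inverses of norm $\leq C$, so $S=\left(  \left\vert \mathcal{G}\right\vert ^{-1}\sum_{U\in\mathcal{G}}\widetilde{\rho}\left(  U\right)  ^{\ast}\widetilde{\rho}\left(  U\right)  \right)  ^{1/2}$ is invertible with $\left\Vert S\right\Vert \left\Vert S^{-1}\right\Vert \leq C^{2}$ and $S\widetilde{\rho}\left(  U\right)  S^{-1}$ unitary for every $U\in\mathcal{G}$; as $\mathcal{G}$ linearly spans $\mathcal{B}$, the homomorphism $\widetilde{\rho}^{\prime}=S\widetilde{\rho}\left(  \cdot\right)  S^{-1}$ restricts to a unital $\ast$-homomorphism on $\mathcal{B}$, and $\left\Vert \widetilde{\rho}^{\prime}\right\Vert \leq C^{3}$. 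After a further unitary conjugation (which leaves $\left\Vert S\right\Vert \left\Vert S^{-1}\right\Vert $ unchanged) I may assume $\widetilde{\rho}^{\prime}\left(  \mathcal{B}\right)  $ is in standard block form. Using the structure recalled before Definition \ref{WAD}, with $\left\{  P_{i}\right\}  $ the minimal central projections of $\mathcal{B}$ and $P_{i}\mathcal{A}^{\#\#}P_{i}\cong M_{k_{i}}\otimes\mathcal{A}_{i}$ (so each $k_{i}\geq k_{\lambda}\geq m$), the fact that $\widetilde{\rho}^{\prime}\left(  \mathcal{C}\right)  $ commutes with $\widetilde{\rho}^{\prime}\left(  \mathcal{B}\right)  $ forces $\widetilde{\rho}^{\prime}|_{P_{i}\mathcal{A}^{\#\#}P_{i}}$ to have the form $\mathrm{id}_{M_{k_{i}}}\otimes\widehat{\sigma}_{i}$ for a unital homomorphism $\widehat{\sigma}_{i}:\mathcal{A}_{i}\rightarrow B\left(  K_{i}\right)  $, and the whole point is that, since $m\leq k_{i}$,
\[
\left\Vert \left(  \widehat{\sigma}_{i}\right)  _{m}\right\Vert \leq\left\Vert \mathrm{id}_{M_{k_{i}}}\otimes\widehat{\sigma}_{i}\right\Vert =\left\Vert \widetilde{\rho}^{\prime}|_{P_{i}\mathcal{A}^{\#\#}P_{i}}\right\Vert \leq\left\Vert \widetilde{\rho}^{\prime}\right\Vert \leq C^{3}.
\]
Because $\varphi_{\lambda}^{\left(  m\right)  }\left(  T\right)  \in M_{m}\left(  \mathcal{C}\right)  $ has norm $\leq1$ and $\widetilde{\rho}_{m}^{\prime}$ acts on $M_{m}\left(  \mathcal{C}\right)  $ block-diagonally through the $\left(  \widehat{\sigma}_{i}\right)  _{m}$ (against $1_{k_{i}}$ factors), I get $\left\Vert \widetilde{\rho}_{m}^{\prime}\left(  \varphi_{\lambda}^{\left(  m\right)  }\left(  T\right)  \right)  \right\Vert \leq\max_{i}\left\Vert \left(  \widehat{\sigma}_{i}\right)  _{m}\right\Vert \leq C^{3}$, hence $\left\Vert \widetilde{\rho}_{m}\left(  \varphi_{\lambda}^{\left(  m\right)  }\left(  T\right)  \right)  \right\Vert \leq\left\Vert S^{-1}\right\Vert \left\Vert S\right\Vert C^{3}\leq C^{5}$, and combining with the $\liminf$ estimate yields $\left\Vert \rho_{m}\right\Vert \leq C^{5}$ for every $m$, so $\left\Vert \rho\right\Vert _{cb}\leq\left\Vert \rho\right\Vert ^{5}$.

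I expect the displayed inequality $\left\Vert \left(  \widehat{\sigma}_{i}\right)  _{m}\right\Vert \leq\left\Vert \widetilde{\rho}^{\prime}\right\Vert $ to be the crux, and it is the only place the hypothesis $\mathrm{subrank}\left(  \mathcal{B}_{\lambda}\right)  \rightarrow\infty$ is genuinely used: the large matrix subalgebra $\mathcal{B}_{\lambda}$ lets one absorb the amplification index $m$ into the matrix size $k_{i}$, so an a priori uncontrolled matrix-amplification norm of $\widehat{\sigma}_{i}$ becomes part of the ordinary homomorphism norm of $\widetilde{\rho}^{\prime}$. The remaining technical points to check carefully are the multiplicative weak*-continuous extension of $\rho$ to $\mathcal{A}^{\#\#}$, the claim that the signed-permutation group linearly spans each matrix block (so that unitarity of $\widetilde{\rho}^{\prime}$ on $\mathcal{G}$ upgrades to $\widetilde{\rho}^{\prime}|_{\mathcal{B}}$ being a $\ast$-homomorphism), and the explicit identification of $\widetilde{\rho}^{\prime}$ on each corner $P_{i}\mathcal{A}^{\#\#}P_{i}$ as $\mathrm{id}_{M_{k_{i}}}\otimes\widehat{\sigma}_{i}$.
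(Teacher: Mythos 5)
Your proof is correct and follows the same route as the paper: extend $\rho$ to a normal homomorphism on $\mathcal{A}^{\#\#}$, push a matrix $T$ through the net $\left\{\left(\mathcal{B}_{\lambda},\varphi_{\lambda}\right)\right\}$ of Definition \ref{WAD}, and use weak*-lower semicontinuity of the norm to reduce to estimating $\rho_m$ on matrices over $\mathcal{B}_{\lambda}^{\prime}\cap\mathcal{A}^{\#\#}$ when $\mathrm{subrank}\left(\mathcal{B}_{\lambda}\right)\geq m$. The only difference is that the paper delegates this last estimate to Lemma 3.1 of \cite{li}, whereas you prove it in full (averaging over the signed-permutation group to make $\widetilde{\rho}\,|_{\mathcal{B}_{\lambda}}$ similar to a $\ast$-homomorphism at cost $C^{2}$, then absorbing the amplification index $m$ into the block size $k_{i}$), which is exactly the content of that cited lemma.
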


\begin{proof}
Suppose $H$ is a Hilbert space and $\rho:\mathcal{A}\rightarrow B\left(
H\right)  $ is a bounded unital homomorphism. Then $\rho$ extends uniquely to
a normal homomorphism $\bar{\rho}:\mathcal{A}^{\#\#}\rightarrow B\left(
H\right)  $. Suppose $A=\left(  a_{ij}\right)  \in\mathcal{M}_{n}\left(
\mathcal{A}\right)  $. Since $\mathcal{A}$ is weakly approximately divisible,
we can choose a net $\left\{  \left(  \mathcal{B}_{\lambda},\varphi_{\lambda
}\right)  \right\}  _{\lambda\in\Lambda}$ as in Definition \ref{WAD}
corresponding to $\mathcal{F}=\left\{  a_{ij}:1\leq i,j\leq n\right\}  $. We
know that%
\[
\bar{\rho}_{n}\left(  \varphi_{\lambda}\left(  a_{ij}\right)  \right)
=\left(  \bar{\rho}\left(  \varphi_{\lambda}\left(  a_{ij}\right)  \right)
\right)  \rightarrow\left(  \bar{\rho}\left(  a_{ij}\right)  \right)
=\rho_{n}\left(  A\right)  ,
\]
where the convergence is in the weak* topology. Moreover, since $\varphi
_{\lambda}$ is completely contractive, we have that
\[
\left\Vert \left(  \varphi_{\lambda}\left(  a_{ij}\right)  \right)
\right\Vert \leq\left\Vert A\right\Vert ,
\]
so
\[
\lim_{\lambda}\left\Vert \left(  \varphi_{\lambda}\left(  a_{ij}\right)
\right)  \right\Vert =\left\Vert A\right\Vert ,
\]
and%
\[
\left\Vert \rho_{n}\left(  A\right)  \right\Vert \leq\limsup_{\lambda
}\left\Vert \bar{\rho}_{n}\left(  \varphi_{\lambda}\left(  a_{ij}\right)
\right)  \right\Vert .
\]
However, $\varphi_{\lambda}\left(  a_{ij}\right)  \in\mathcal{B}_{\lambda
}^{\prime}$ for $1\leq i,j\leq n$ and $\lim_{\lambda}\emph{subrank}\left(
\mathcal{B}_{\lambda}\right)  =\infty$. So the remainder of the proof follows
from Lemma 3.1 in \cite{li}.
\end{proof}

\bigskip

In \cite{F. Pop} F. Pop proved that if $\mathcal{A}$ is a nuclear C*-algebra
containing copies of $\mathcal{M}_{n}\left(  \mathbb{C}\right)  $ for
arbitrarily large values of $n$, then the similarity degree of $\mathcal{A}%
\otimes\mathcal{B}$ is at most $5$ for every unital C*-algebra $\mathcal{B}$.
In \cite{li} the second author showed that this result remains true if
$\mathcal{A}$ is nuclear and contains finite-dimensional algebras with
arbitrarily large subrank. It was shown by \cite{JW} that if $\mathcal{A}$ is
nuclear and contains homomorphic images of certain dimension drop C*-algebras
$\mathcal{Z}_{p,q}$ for all relatively prime integers $p,q$ (e.g.,
$\mathcal{A}$ contains a copy of the Jiang-Su algebra), then, for every unital
C*-algebra $\mathcal{B}$, the similarity degree of $\mathcal{A}\otimes
\mathcal{B}$ is at most $5$. The following corollary includes all of these results.

\bigskip

\begin{corollary}
If $\mathcal{A}$ is a unital tracially nuclear NFD C*-algebra, then, for every
unital C*-algebra $\mathcal{B}$, the similarity degree of $\mathcal{A}%
\otimes\mathcal{B}$ is at most $5$.
\end{corollary}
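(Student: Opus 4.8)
The plan is to assemble the corollary from the results already proved, using weak approximate divisibility as the bridge. The first step is to note that $\mathcal{A}$ itself is weakly approximately divisible: since $\mathcal{A}$ is tracially nuclear, part $(9)$ of the first theorem of Section 2 says that weak approximate divisibility is equivalent to being NFD, and being NFD is part of the hypothesis. Hence $\mathcal{A}$ is weakly approximately divisible.

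The second step handles the tensor product. For the maximal C*-crossnorm, part $(4)$ of that theorem gives directly that $\mathcal{A}\otimes_{\max}\mathcal{B}$ is weakly approximately divisible. For an arbitrary C*-crossnorm $\alpha$ on the algebraic tensor product of $\mathcal{A}$ and $\mathcal{B}$ one has $\left\Vert\cdot\right\Vert_{\alpha}\leq\left\Vert\cdot\right\Vert_{\max}$, so the identity map of the algebraic tensor product extends to a surjective unital $\ast$-homomorphism from $\mathcal{A}\otimes_{\max}\mathcal{B}$ onto $\mathcal{A}\otimes_{\alpha}\mathcal{B}$; part $(2)$ of the theorem then shows that $\mathcal{A}\otimes_{\alpha}\mathcal{B}$ is weakly approximately divisible. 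Finally, the theorem of the present section, that a weakly approximately divisible C*-algebra has similarity degree at most $5$, yields $d\left(\mathcal{A}\otimes_{\alpha}\mathcal{B}\right)\leq5$, which is the assertion of the corollary (for every C*-crossnorm).

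The nearest thing to an obstacle is exactly this passage from $\otimes_{\max}$ to an arbitrary crossnorm: because tracial nuclearity does not imply nuclearity, the tensor product is genuinely norm-dependent and one cannot simply quote the max-norm statement, but the quotient observation combined with part $(2)$ closes the gap at once. It remains only to remark that this corollary subsumes the theorems of F. Pop, of the second author, and of Jiang--Wang quoted above: a nuclear C*-algebra is tracially nuclear, and a C*-algebra containing unital copies of $\mathcal{M}_{n}\left(\mathbb{C}\right)$ for arbitrarily large $n$ (respectively, finite-dimensional C*-subalgebras of arbitrarily large subrank, or homomorphic images of $\mathcal{Z}_{p,q}$ for all coprime $p,q$, which contain such matrix algebras after tensoring up) has no finite-dimensional representation, since a unital $\ast$-homomorphism of $\mathcal{M}_{n}\left(\mathbb{C}\right)$ into $\mathcal{M}_{k}\left(\mathbb{C}\right)$ forces $n\leq k$.
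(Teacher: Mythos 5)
Your proposal is correct and follows exactly the route the paper intends (the corollary is stated without an explicit proof): part $(9)$ of the theorem in Section 2 gives that $\mathcal{A}$ is weakly approximately divisible, part $(4)$ handles $\mathcal{A}\otimes_{\max}\mathcal{B}$, part $(2)$ passes to an arbitrary C*-crossnorm via the quotient map from the maximal tensor product, and the theorem of Section 3 gives $d\leq 5$. Your explicit handling of the crossnorm issue is a welcome clarification of a point the paper only signals in the introduction.
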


\bigskip

\end{document}